\documentclass[reqno]{article}		
\usepackage{amssymb,amsmath,amsthm,enumitem}

\setlist{nolistsep}        

\newtheorem{lemma}{Lemma}
\newtheorem*{question}{Question}

\newcommand{\abs}[1]{\lvert#1\rvert}
\newcommand{\norm}[1]{\lVert#1\rVert}
\newcommand{\act}{\bullet}
\newcommand{\dom}{\mathsf{dom}}
\newcommand{\supp}{\mathsf{supp}}
\newcommand{\MIN}{\mathsf{MIN}}
\newcommand{\orbit}[1]{\mathsf{orb}(#1)}
\newcommand{\restr}{\mathord\lceil}
\newcommand{\psm}{\Delta}
\newcommand{\closure}[1]{\overline{#1}}
\newcommand{\Ub}{\mathsf{U_b}}
\newcommand{\BLipb}{\mathsf{BLip_b}}

\title{Another ambitable group}

\author{Jan Pachl}
\date{June 4, 2026}

\begin{document}

\maketitle

\begin{abstract}
It is an open question whether every topological group is precompact or ambitable.
This note presents a topological group that is ambitable but does not satisfy 
previously known sufficient conditions for being so.
\end{abstract}


\section{Group $G$}

The following question was asked in~\cite{Pachl2009atg}.

\begin{question}
Is every topological group either precompact or ambitable?
\end{question}

Theorem 3.35 in~\cite{Pachl2013usm} gave a positive answer
for a large class of topological groups.
That has left the question open for the remaining rather special groups.
The present note describes one such group $G$
to which the proof of the theorem does not apply.
Nevertheless, as is shown here using a somewhat different proof, $G$ is ambitable.

We work in the ZFC set theory.
As in~\cite{Kunen2011st}, every ordinal is the set of smaller ordinals,
$\omega=\omega_0$ is the least infinite ordinal, 
$\omega_{k+1}$ is the least ordinal of cardinality larger than $\omega_k$,
and $\omega_\omega = \bigcup_{k\in\omega} \omega_k$.
Hence 
$\omega\subseteq\omega_1\subseteq\omega_2\subseteq\dots\subseteq\omega_\omega$.

The set of elements of $G$ is the reduced product $\{0,1\}^{(\omega_\omega)}$; 
i.e., the set of the collections $x=\{x_\lambda\}_{\lambda\in\omega_\omega}$
of zeros and ones indexed by $\lambda\in\omega_\omega$
in which $x_\lambda =1$ for only finitely many $\lambda$.
The group operation is the pointwise addition,
$
x + y := \{ x_\lambda + y_\lambda \mod 2 \}_{\lambda\in\omega_\omega}
$.
The element $e_G =e\in G$ such that $e_\lambda = 0$ for all $\lambda$ 
is the identity of $G$.
Note that $-x=x$ and $x-y=x+y$ for $x,y\in G$.
Write
\begin{align*}
\supp(x) &:= \{ \lambda \in \omega_\omega \mid x_\lambda = 1 \}   
        \quad \text{for } x=\{x_\lambda\}_{\lambda\in\omega_\omega}\in G  \\
\MIN(x) &:= \min \{ k\in \omega \mid \supp(x) \cap \omega_k \neq \emptyset \}
                    \quad \text{for } x\in G, x\neq e_G  \\
\rho(x,y) &:= \frac{1}{1+\MIN(x+y)} \quad \text{for } x,y\in G, x\neq y 
\end{align*}
That defines an invariant metric $\rho$ which makes $G$ into a (metrizable abelian) 
topological group.

Basic notation and terminology here are as in~\cite{Pachl2013usm}. 
Functions and functionals are either real- or complex-valued;
the proof in the next section works in both cases.

\begin{itemize}
\item
For a bounded function $h$ on $G$, the sup norm of $h$ is
$\norm{h}=\sup_{x\in G} \abs{h(x)}$.
\item
The space of bounded uniformly continuous functions on $G$ is denoted $\Ub(G)$. 
\item
For $F\subseteq \Ub(G)$, $\closure{F}$ is the closure of $F$ 
in the space of all functions on $G$, in the pointwise topology.
\item 
For $h\in\Ub(G)$ and $y\in G$, define $y\act h \in \Ub(G)$
by $y\act h(x) := h(x+y)$, $x\in G$.
The orbit of $h$ is
$
\orbit{h} := \{ y\act h \mid y \in G \}
$.
\item
For a pseudometric $\psm$ on~$G$,
$\BLipb(\psm)$ is the set of functions $h$ on $G$ such that
$\norm{h}\leq 1$ and $\abs{h(x)-h(y)} \leq \psm(x,y)$ for all $x,y \in G$.
\end{itemize}
Note that for every pseudometric $\psm$ we have 
$\closure{\BLipb(\psm)} = \BLipb(\psm)$.
If moreover $\psm$ is continuous and invariant on $G$ then 
$\BLipb(\psm)\subseteq \Ub(G)$ and $\orbit{h} \subseteq \BLipb(\psm)$
for every $h\in \BLipb(\psm)$.

In the next section I prove that $G$ is ambitable.
That is, for every continuous invariant pseudometric $\psm$ on $G$
there exists $h\in\Ub(G)$ such that $\BLipb(\psm) \subseteq \closure{\orbit{h}}$.


\section{$G$ is ambitable}

To prove that $G$ is ambitable, 
take any continuous invariant pseudometric $\psm$ on $G$.
The proof will be carried out assuming that $\psm$ is a metric of the form
\begin{equation}
\psm(x,y) = s_{\MIN(x+y)} \quad\text{for } x,y\in G, x\neq y,
\label{cond:pmform}
\end{equation}
where $\{s_k\}_{k\in\omega}$ is a sequence such that
$s_0 > s_1 > \ldots > 0$ and $\lim_k s_k = 0$.
This assumption causes no loss of generality.
Indeed, given any continuous invariant pseudometric $\psm^\prime$ on $G$, we can find
(using for example the method in the proof of Lemma~3.3 in~\cite{Pachl2013usm})
a continuous invariant metric $\psm$ of the form~(\ref{cond:pmform})
and such that $\psm \geq \min(2,\psm^\prime)$,
and then $\BLipb(\psm^\prime)\subseteq \BLipb(\psm)$.

The key idea of the proof is to consider $G$ 
as the increasing union of subgroups $G_k$
defined by the increasing sequence of ordinals $\omega_k$.
For $k\in\omega$ define the subgroup
$G_k := \{ x\in G \mid \supp(x) \subseteq \omega_k \}$.
For $k\in\omega$,
denote by $\pi_k \colon G \to G_k$ the natural projection.
Thus $\pi_k (x) = y$ means that 
$y_\lambda = x_\lambda$ for $\lambda\in\omega_k$
and $y_\lambda = 0$ for $\lambda\not\in\omega_k$.
Note that $\pi_k$ is a continuous homomorphism from $G$ onto $G_k$.

We have 
$\psm(x,y) \leq s_{k+1}$ if and only if $\pi_k(x) = \pi_k(y)$.
In particular, $\psm(x,\pi_k(x)) \leq s_{k+1}$ for all $x\in G$.

The proof uses partial functions defined on subsets of $G$.
The domain of a partial function $\varphi$ is denoted $\dom(\varphi)$.
For a partial function $\varphi$ and a set $S$, 
${\varphi}\restr{S}$ denotes $\varphi$ 
restricted to the set $\dom(\varphi) \cap S$.
To simplify notation, write $\varphi/k$ instead of $\varphi\restr G_k$.

Take any countable dense subset $Q$ of the set of (real or complex)
numbers of absolute value $\leq 1$.
Let $\Phi$ be the set of all non-empty finite partial functions 
$\varphi$ from $G$ to $Q$ such that
\begin{align}
\varphi = f \restr \dom(\varphi) \quad&\text{for some } f \in 2\,\BLipb(\psm)
    \label{cond:lip}    \\
\pi_k(\dom(\varphi)) \subseteq \dom(\varphi) \quad&\text{for all } k\in\omega  
    \label{cond:prclosed}
\end{align}
For $k\in \omega$ write
$\Phi_k := \{ \varphi \in \Phi \mid \dom(\varphi) \subseteq G_k \}$.
We have $\abs{\Phi_k} = \abs{G_k} = \aleph_k$.

Note that $\Phi$ is ``dense'' in $\BLipb(\psm)$, in this sense: 
If $F\subseteq \Ub(G)$ is a set of functions such that 
\[
\forall \varphi\in\Phi \quad\exists f\in F \quad \varphi = f \restr \dom(\varphi)
\]
then $\BLipb(\psm) \subseteq \closure{F}$.
The proof will be complete once we find a function $h\in\Ub(G)$ 
for which the set $F=\orbit{h}$ has this property.

\begin{lemma}
    \label{cl:one}
There are $z_\varphi \in G$ for $\varphi \in \Phi$ such that
\begin{itemize}
\item[(i)]
if $\varphi\in\Phi_k$ then $z_\varphi \in G_k$
\item[(ii)]
$\pi_j(z_\varphi) = z_{\varphi/j}$ for every $j\in\omega$
\item[(iii)]
if $\varphi,\psi\in\Phi$, $\varphi \neq \psi$ then
$(\dom(\varphi) + z_\varphi) \cap (\dom(\psi) + z_\psi ) = \emptyset$
\end{itemize}
\end{lemma}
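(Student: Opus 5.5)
We construct the elements $z_\varphi$ by induction on $k$, defining $z_\varphi$ for all $\varphi\in\Phi_k\setminus\Phi_{k-1}$ at stage $k$ (with $\Phi_{-1}:=\emptyset$). This covers all of $\Phi$: every $\varphi\in\Phi$ has finite domain, each element of $G$ lies in some $G_k$, and the $G_k$ increase, so $\varphi\in\Phi_k$ for some $k$.

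\emph{Preliminary checks.} First, for $\varphi\in\Phi$ and $j\in\omega$ we have $\varphi/j\in\Phi_j$. Indeed, $\pi_j$ maps any point of $\dom(\varphi)$ into $\dom(\varphi)\cap G_j$ by (\ref{cond:prclosed}), so this set is non-empty. Conditions (\ref{cond:lip}) and (\ref{cond:prclosed}) pass to the restriction, because $\pi_i$ maps $G_j$ into $G_j$. Second, if $\varphi\in\Phi_k$ and $j\ge k$ then $\varphi/j=\varphi$. So (ii) for such $j$ just says $\pi_j(z_\varphi)=z_\varphi$, which follows from (i). Third, $(\varphi/k)/j=\varphi/j$ and $\pi_j\pi_k=\pi_{\min(j,k)}$. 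Hence if we arrange $\pi_k(z_\varphi)=z_{\varphi/k}$ for $\varphi\in\Phi_{k+1}$, then (ii) for all $j<k$ follows from (ii) for $\varphi/k\in\Phi_k$, which holds by the induction hypothesis. Finally, condition (iii) for $\varphi,\psi$ is equivalent to
\[
z_\varphi+z_\psi\notin \dom(\varphi)+\dom(\psi),
\]
and the right-hand side is a finite set.

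\emph{Stage $0$.} The set $\Phi_0$ is countable, and $G_0$ is countably infinite. Enumerate $\Phi_0$ as $\varphi_0,\varphi_1,\dots$ and choose $z_{\varphi_n}\in G_0$ successively. Each earlier $\varphi_m$ excludes only the finite set $z_{\varphi_m}+\dom(\varphi_n)+\dom(\varphi_m)$, so a suitable choice always exists.

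\emph{Stage $k+1$.} Assume that $z_\psi$ is defined for all $\psi\in\Phi_k$ and satisfies (i)--(iii) there. Let $H_{k+1}$ be the subgroup of elements with support in $\omega_{k+1}\setminus\omega_k$; then $\abs{H_{k+1}}=\aleph_{k+1}$. For $\varphi\in\Phi_{k+1}\setminus\Phi_k$ we will put
\[
z_\varphi:=z_{\varphi/k}+w_\varphi \quad\text{with } w_\varphi\in H_{k+1}.
\]
This gives (i) and $\pi_k(z_\varphi)=z_{\varphi/k}$, hence (ii) by the preliminary checks. Well-order $\Phi_{k+1}\setminus\Phi_k$ in order type at most $\omega_{k+1}$ and choose the $w_\varphi$ by transfinite recursion.

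Condition (iii) for a pair $\varphi,\psi$ forbids only a small set of values of $w_\varphi$:
\begin{itemize}
\item If $\psi\in\Phi_k$, then $z_\psi\in G_k$, and the $H_{k+1}$-component of $z_\varphi+z_\psi$ is exactly $w_\varphi$. So (iii) is guaranteed once $w_\varphi$ avoids the finitely many $H_{k+1}$-components of elements of $\dom(\varphi)+\dom(\psi)$.
\item If $\psi$ is an earlier element of $\Phi_{k+1}\setminus\Phi_k$, then $z_\varphi+z_\psi=(z_{\varphi/k}+z_{\psi/k})+(w_\varphi+w_\psi)$ with the first summand in $G_k$. So (iii) holds once $w_\varphi$ avoids the finite set $w_\psi+{}$(the $H_{k+1}$-components of $\dom(\varphi)+\dom(\psi)$).
\end{itemize}

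The main obstacle is making sure these exclusions never exhaust $H_{k+1}$, and this is exactly where the cardinals $\omega_k$ matter. There are $\abs{\Phi_k}=\aleph_k$ constraints of the first kind. There are fewer than $\aleph_{k+1}$ constraints of the second kind, since every proper initial segment of $\omega_{k+1}$ has cardinality at most $\aleph_k$. Each constraint excludes finitely many values, so fewer than $\aleph_{k+1}=\abs{H_{k+1}}$ values are excluded in total, and an admissible $w_\varphi$ exists.

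After all stages, every pair $\varphi\neq\psi$ is handled at the stage where the later-defined of the two is chosen. Thus (i)--(iii) hold on all of $\Phi=\bigcup_k\Phi_k$.
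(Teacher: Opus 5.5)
Your proof is correct and follows essentially the same route as the paper. Both use an outer induction on $k$ and an inner transfinite recursion over $\Phi_{k+1}\setminus\Phi_k$ in order type $\omega_{k+1}$, choosing $z_\varphi$ in the coset $z_{\varphi/k}+H_{k+1}=G_{k+1}\cap\pi_k^{-1}(z_{\varphi/k})$ while avoiding at most $\aleph_k$ forbidden values. The differences are cosmetic: you phrase the exclusions through $H_{k+1}$-components, whereas the paper excludes $\dom(\chi)+G_k$ so that $\dom(\chi)+z_\chi$ misses $G_k$ entirely, and you spell out the checks $\varphi/j\in\Phi_j$ and the propagation of (ii) to $j<k$ that the paper leaves implicit.
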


\noindent
Note that $\pi_j(z_\varphi) = z_{\varphi/j}$ in property (ii)
trivially holds for $j\geq k$ when $\varphi\in\Phi_k$ and $z_\varphi \in G_k$.

\begin{proof}
Proceed by two levels of recursion. The outer recursion is over $k\in\omega$.

\emph{Basis, $k=0$:}
Let $<$ be a well ordering of $\Phi_0$ isomorphic to $\omega$.
Proceed by recursion in $(\Phi_0,<)$.
When $\chi\in\Phi_0$ is such that $z_\varphi \in G_0$ have been chosen for 
$\varphi\in\Phi_0$, $\varphi<\chi$, the set
\[
A := \dom(\chi) + \bigcup \{ \dom(\varphi) + z_\varphi 
    \mid \varphi\in\Phi_0, \varphi < \chi \}
\]
is finite.
Hence there is $z_\chi \in G_0 \setminus A$.
Then $(\dom(\varphi) + z_\varphi) \cap (\dom(\chi) + z_\chi ) = \emptyset$
for every $\varphi < \chi$.

\emph{Recursive step:}
Fix $k\in\omega$ and assume there are $z_\varphi \in G_k$ satisfying
(i), (ii) and (iii) for all $\varphi,\psi \in \Phi_k$.
Write $\Psi := \Phi_{k+1} \setminus \Phi_{k}$
and let $<$ be a well ordering of $\Psi$ isomorphic to $\omega_{k+1}$.
Proceed by recursion in $(\Psi,<)$.
When $\chi\in\Psi$ is such that $z_\varphi \in G_{k+1}$ have been chosen for 
$\varphi\in\Psi$, $\varphi<\chi$,
the set
\[
A := \dom(\chi) + \left( G_{k} \cup \bigcup \{ \dom(\varphi) + z_\varphi 
    \mid \varphi\in\Psi, \varphi < \chi \} \right)
\]
has cardinality $\aleph_{k}$.
Since $\abs{G_{k+1} \cap \pi^{-1}_{k} (z_{\chi/k})} = \aleph_{k+1}$,
there exists 
$z_\chi \in (G_{k+1} \cap \pi^{-1}_{k} (z_{\chi/k})) \setminus A$.
It follows that
\begin{itemize}
\item
$\pi_{k}(z_\chi) = z_{\chi/k}$,
and therefore $\pi_{j}(z_\chi) = z_{\chi/j}$ for all $j\in\omega$;
\item
$(\dom(\varphi) + z_\varphi) \cap (\dom(\chi) + z_\chi ) = \emptyset$
for every $\varphi\in\Phi_{k}$;
\item
$(\dom(\varphi) + z_\varphi) \cap (\dom(\chi) + z_\chi ) = \emptyset$
for every $\varphi\in\Psi$, $\varphi < \chi$.
\qedhere
\end{itemize}
\end{proof}

\begin{lemma}
    \label{cl:two}
Let $z_\varphi \in G$ for $\varphi \in \Phi$ be as in Lemma~\ref{cl:one}.
Let $k\in\omega$, $\varphi,\psi\in\Phi$, $x\in\dom(\varphi)$ and $y\in\dom(\psi)$
be such that $\psm(x+z_\varphi,y+z_\psi) \leq s_{k+1}$.
Then $\abs{\varphi(x)-\psi(y)} \leq 4 s_{k+1}$.
\end{lemma}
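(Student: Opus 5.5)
The plan is to reduce everything to level $k$ via the projection $\pi_k$, use the disjointness property (iii) of Lemma~\ref{cl:one} to show that $\varphi$ and $\psi$ agree at level $k$, and then pay a Lipschitz error of $2s_{k+1}$ on each side.

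First I translate the hypothesis. By the remark after the definition of $\pi_k$, $\psm(x+z_\varphi,y+z_\psi)\le s_{k+1}$ is equivalent to $\pi_k(x+z_\varphi)=\pi_k(y+z_\psi)$. Since $\pi_k$ is a homomorphism and Lemma~\ref{cl:one}(ii) gives $\pi_k(z_\varphi)=z_{\varphi/k}$ and $\pi_k(z_\psi)=z_{\psi/k}$, this becomes
\[
\pi_k(x)+z_{\varphi/k}=\pi_k(y)+z_{\psi/k}.
\]

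Next I check that Lemma~\ref{cl:one}(iii) applies to $\varphi/k$ and $\psi/k$. By (\ref{cond:prclosed}), $\pi_k(x)\in\dom(\varphi)\cap G_k=\dom(\varphi/k)$, so $\varphi/k$ is non-empty. It is finite and $Q$-valued, and it is a restriction of the same $f\in 2\,\BLipb(\psm)$ as $\varphi$. Its domain is closed under every $\pi_j$, since $\pi_j$ maps $G_k$ into $G_k$ and $\dom(\varphi)$ into itself. Hence $\varphi/k\in\Phi_k\subseteq\Phi$, and likewise $\psi/k\in\Phi$ with $\pi_k(y)\in\dom(\psi/k)$. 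The displayed equation says that $\dom(\varphi/k)+z_{\varphi/k}$ and $\dom(\psi/k)+z_{\psi/k}$ intersect. By Lemma~\ref{cl:one}(iii) this forces $\varphi/k=\psi/k$. Then $z_{\varphi/k}=z_{\psi/k}$, and cancelling gives $\pi_k(x)=\pi_k(y)=:u$. In particular $\varphi(u)=(\varphi/k)(u)=(\psi/k)(u)=\psi(u)$.

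Finally, by (\ref{cond:lip}) the function $\varphi$ is the restriction of some $f\in2\,\BLipb(\psm)$, so $\abs{\varphi(x)-\varphi(u)}\le 2\psm(x,u)$. Since $u=\pi_k(x)$, we have $\psm(x,u)\le s_{k+1}$, which gives $\abs{\varphi(x)-\varphi(u)}\le 2s_{k+1}$. In the same way $\abs{\psi(y)-\psi(u)}\le 2s_{k+1}$. The triangle inequality then yields $\abs{\varphi(x)-\psi(y)}\le 4s_{k+1}$.

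The only step needing care is the middle one: verifying that $\varphi/k$ is a genuine element of $\Phi$ (non-empty and projection-closed), so that (iii) can be invoked. This is exactly where condition (\ref{cond:prclosed}) is used.
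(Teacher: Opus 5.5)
Your proof is correct and follows the paper's argument step for step. You project to level $k$ using (ii), apply (iii) to $\varphi/k$ and $\psi/k$ to get $\varphi/k=\psi/k$ and $\pi_k(x)=\pi_k(y)$, and then pay $2s_{k+1}$ on each side via (\ref{cond:lip}); your explicit check that $\varphi/k\in\Phi_k$ (non-empty and projection-closed) is correct and fills in a detail the paper leaves implicit.
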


\begin{proof}
As noted above,
$\psm(x+z_\varphi,y+z_\psi) \leq s_{k+1}$ is equivalent to
$\pi_k(x + z_\varphi) = \pi_k(y + z_\psi)$.
That together with (ii) in Lemma~\ref{cl:one} yields
\[
\pi_k(x) + z_{\varphi/k} = \pi_k(x) + \pi_k(z_\varphi) = \pi_k(x + z_\varphi)
= \pi_k(y + z_\psi) = \pi_k(y) + \pi_k(z_\psi) = \pi_k(y) + z_{\psi/k} \; .
\]

By (\ref{cond:prclosed}) we have 
$\pi_k(x)\in \dom(\varphi/k)$ and $\pi_k(y)\in \dom(\psi/k)$.
Hence $\varphi/k=\psi/k$ by (iii) in Lemma~\ref{cl:one},
$\pi_k(x) = \pi_k(y)$,
and $\varphi(\pi_k (x)) = \psi(\pi_k (y))$.
Since $\psm(x,\pi_k(x)) \leq s_{k+1}$ and $\psm(y,\pi_k(y)) \leq s_{k+1}$,
from~(\ref{cond:lip}) we get
\begin{align*}
\abs{\varphi(x) - \psi(y)} 
&\leq \abs{\varphi(x) - \varphi(\pi_k(x))} + \abs{\psi(\pi_k(y)) - \psi(y)} \\
&\leq 2\psm(x,\pi_k(x)) + 2\psm(y,\pi_k(y)) \leq 4 s_{k+1} .
\qedhere
\end{align*}
\end{proof}

With the two lemmas we are now ready to prove that $G$ is ambitable.

Take $z_\varphi \in G$ for $\varphi \in \Phi$ as in Lemma~\ref{cl:one}.
Let $D:= \bigcup \{ \dom(\varphi) + z_\varphi \mid \varphi \in \Phi \}$.
By Lemma~\ref{cl:two}, the following function $\tilde{h}\colon D \to Q$
is well defined and uniformly continuous on the subspace $D$ of $G$:
\[
\tilde{h}(x + z_\varphi ) := \varphi(x) \quad\text{for } x \in \dom(\varphi) .
\]
It follows~\cite[2.22]{Pachl2013usm} that $\tilde{h}$ extends to a function $h\in\Ub(G)$. 
We have $\varphi = ({z_\varphi}\act h )\restr \dom(\varphi)$
for every $\varphi\in\Phi$.
Hence $\BLipb(\psm) \subseteq \closure{\orbit{h}}$, q.e.d.

\vspace{8mm}

{\bf Acknowledgement.} 
I wish to thank Juris Stepr\={a}ns for discussions of and comments
on the content of this note.


\end{document}